\documentclass[11pt,a4paper]{amsart}
\usepackage[utf8]{inputenc}
\usepackage{amsmath}
\usepackage{amsfonts}
\usepackage{amssymb}
\usepackage{amsthm}
\usepackage{setspace}
\usepackage{hyperref}
\usepackage{graphicx}
\usepackage{stmaryrd}
\usepackage[shortlabels]{enumitem}
\usepackage{mathtools}
\usepackage{subcaption}
\usepackage[T1]{fontenc}
\usepackage{xcolor}
\usepackage{float}
\usepackage{tikz-cd}
\usepackage{tikz}
\usepackage[noadjust]{cite}
\usepackage{ragged2e}
\usepackage[margin=1.4in]{geometry}
\graphicspath{ {./images/} }
\newtheorem{theorem}{Theorem}[section]
\newtheorem{proposition}[theorem]{Proposition}
\newtheorem{lemma}[theorem]{Lemma}

\theoremstyle{definition}

\newtheorem{remark}[theorem]{Remark}

\newcommand{\Helly}{\mathrm{Helly}}

\newcommand{\ZZ}{\mathbb{Z}}

\subjclass{51F99, 05C99, 52C99, 20F65}
\keywords{Helly graphs, minimal fillings, injective hulls}

\title[Helly complexes are Hellyfications of their boundaries] {Helly complexes are Hellyfications of their boundaries}
\author[M.~Blufstein]{Martín Blufstein}
\address{Departamento de Matemática-IMAS, FCEyN, Universidad de Buenos Aires, Buenos Aires, Argentina}
\email{mblufstein@dm.uba.ar}

\date{}

\begin{document}

\begin{abstract}
\centering \justifying We prove that every finite Helly complex is isomorphic, at the level of its $1$-skeleton, to the Hellyfication (equivalently, the discrete injective hull) of its combinatorial boundary equipped with the metric induced from the complex. In particular, the boundary-rigidity phenomenon for Helly complexes proved by Blufstein-Chalopin-Chepoi admits a canonical injective-hull interpretation, independent of reconstruction procedures based on dismantling.
\end{abstract}

\maketitle

\section{Introduction}

A recurring theme in metric graph theory is that an object may be determined by a distinguished subset on which only the ambient metric is visible.
For finite simplicial complexes, a natural distinguished subset is the combinatorial boundary.
This question can be traced back to a classical result in phylogeny by Buneman~\cite{Buneman1974} and Zareckii~\cite{Zareckii1965}, stating that trees can be reconstructed from the pairwise distances between their leaves; and to a conjecture by Michel~\cite{Michel1981} asking whether all simple compact Riemannian manifolds can be determined up to isometry by its boundary distance function (see \cite{BessonCourtoisGallot1995,BuragoIvanov2010,PestovUhlmann2005} for positive results).
In the graph theoretic setting, this point of view was developed in the discrete boundary-rigidity program initiated by Haslegrave based on a question by Benjamini~\cite{Haslegrave2023} and subsequently pursued for CAT(0) cube complexes and other nonpositively curved complexes (see \cite{BCC2025boundary,ChalopinChepoi2024,Haslegrave2023,HaslegraveScottTamitegamaTan2025}).
In the Helly setting, the author, Chalopin and Chepoi proved that finite Helly complexes are determined, up to isomorphism, by the distances between boundary vertices measured in the $1$-skeleton; moreover, they obtained a polynomial-time reconstruction procedure \cite{BCC2025boundary}.

Helly graphs are the connected graphs where every family of pairwise intersecting balls has nonempty intersection.
Let $(B,d)$ be a finite metric space with integer-valued distances. 
Its \emph{Hellyfication} is the discrete injective hull $\Helly(B)$, which can be described in terms of extremal integer-valued functions on $B$.
It is the smallest Helly graph containing $B$ isometrically in the sense that every isometric embedding of $B$ into a finite Helly graph extends to an isometric embedding of $\Helly(B)$.
The Hellyfication can be thought of as the discrete counterpart of injective hulls of arbitrary metric spaces (see \cite{isbell1964six,dress1984trees}).

Our main theorem is the following.

\begin{theorem}\label{t:main}
Let $X$ be a connected and finite Helly complex, and let $\partial X$ be its combinatorial boundary. Equip the vertex set of $\partial X$ with the metric induced from $X$.
Then there is an isomorphism
\[
G(X)\cong \Helly(\partial X).
\]
More precisely, the isomorphism is induced by the boundary-distance map
\[
\Phi : V(X)\to \Helly(\partial X), \qquad \Phi(x)(\cdot)=d_X(x,\cdot).
\]
Under this identification, the restriction of $\Phi$ to $V(\partial X)$ is the canonical embedding of $\partial X$ into its Hellyfication.
\end{theorem}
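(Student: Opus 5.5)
The plan is to use the explicit description of $\Helly(B)$ as the set of integer-valued extremal functions on $B=V(\partial X)$: functions $f$ with $f(u)+f(v)\ge d(u,v)$ for all $u,v$, each of which is pointwise minimal with this property. Equivalently, for every $u$ there is $v$ with $f(u)+f(v)=d(u,v)$. Two such functions are adjacent when $\|f-g\|_\infty\le 1$. The proof has three parts: $\Phi$ lands in $\Helly(\partial X)$; $\Phi$ is an isometric embedding with respect to the sup-norm metric; and $\Phi$ is surjective. Together these give that $\Phi$ is a graph isomorphism. The compatibility on $V(\partial X)$ is then tautological, since the canonical embedding sends $b$ to $d(b,\cdot)$.

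\textbf{Step 1: $\Phi(x)$ is extremal.} The inequality $d(x,u)+d(x,v)\ge d(u,v)$ is the triangle inequality. For extremality, fix $x$ and $u\in\partial X$. Extend a geodesic from $u$ through $x$ as far as possible, stopping at a vertex $v$ where no neighbour $w$ of $v$ has $d(u,w)=d(u,v)+1$. I will then show that such a "locally terminal" vertex lies on $\partial X$. The intended argument is that in a finite Helly complex, a vertex whose star is a full interior (ball-like) neighbourhood can always be continued: the link of an interior vertex is a sphere, and using the Helly property on the balls $B(v,1)$ and $B(u,d(u,v)-1)$ one produces a neighbour farther from $u$. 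This gives $d(u,x)+d(x,v)=d(u,v)$, so $\Phi(x)$ is extremal.

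\textbf{Step 2: $\Phi$ is 1-Lipschitz and isometric.} Lipschitz is immediate. For $d_X(x,y)=k$ I need a boundary vertex $b$ with $|d(x,b)-d(y,b)|=k$. I will take $b$ as in Step 1, extending a geodesic from $y$ through $x$ to a terminal vertex. That terminal vertex is on $\partial X$ by the same lemma, so $d(y,b)=d(y,x)+d(x,b)$. Hence $\Phi$ is an isometric embedding, and in particular injective.

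\textbf{Step 3: surjectivity, the main obstacle.} Let $f\in\Helly(\partial X)$. Consider the balls $B(b,f(b))$ in $X$ for $b\in\partial X$. They pairwise intersect, because $f(b)+f(b')\ge d(b,b')$ and $X$ is a geodesic graph. By the Helly property there is $x$ with $d(x,b)\le f(b)$ for all $b$, that is, $\Phi(x)\le f$ pointwise. By Step 1, $\Phi(x)$ satisfies the defining inequalities, so minimality of $f$ forces $\Phi(x)=f$. This step is in fact short once Step 1 is established.

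The real difficulty is therefore the lemma that every vertex of $X$ admitting no geodesic extension away from some vertex $u$ lies on the combinatorial boundary. I would try to prove it via the contractibility and local structure of Helly complexes. If $v\notin\partial X$, every simplex at $v$ is a face of at least two maximal simplices, so the link is a pseudomanifold without boundary. One then finds a neighbour of $v$ outside the set $B(u,d(u,v))$, using that the intersection of this ball with the star of $v$ is a clique-like convex set, by convexity of balls in Helly graphs. Failing that, I would invoke the boundary-rigidity results of \cite{BCC2025boundary}, where such geodesic-extension properties for finite Helly complexes are presumably established.
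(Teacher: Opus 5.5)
\textbf{The key lemma is unproved.} Your architecture matches the paper's. Everything rests on the boundary-control statement (Proposition~\ref{p:boundary_control}): for all $u,x$ there is $w\in V(\partial X)$ with $x\in I_X(u,w)$. This gives both the extremality of $\Phi(x)$ and the isometry. But it is exactly the step you leave open, and your sketch for it does not work.

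\begin{itemize}
\item Links of vertices outside $\partial X$ need not be spheres or pseudomanifolds. The cone over $K_{2,3}$ is Helly, since it has a universal vertex. No simplex containing the apex is a facet of a unique simplex, so the apex is not in $\partial X$, yet its link is $K_{2,3}$.
\item Balls in Helly graphs are not convex. In the king's graph on $\{-1,0,1\}\times\{0,1,2\}$, the vertex $(0,2)$ lies on a geodesic from $(1,1)$ to $(-1,1)$ but outside $B((0,0),1)$.
\item Applying the Helly property to $B(v,1)$ and $B(u,d(u,v)-1)$ can only produce a neighbour \emph{closer} to $u$, not farther.
\item Deferring to \cite{BCC2025boundary} for a property that is ``presumably established'' there is not an argument.
\end{itemize}
As written, Steps 1 and 2, and hence the theorem, are not established.

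\textbf{The missing idea} is to stop looking for a farther neighbour and instead show that a non-extendable vertex is \emph{dominated}. Suppose $k=d(u,v)\ge 1$ and every $w\in N[v]$ has $d(u,w)\le k$. Consider the balls $B(w,1)$ for $w\in N[v]$, together with $B(u,k-1)$. They pairwise intersect: the former all contain $v$, and $d(u,w)\le 1+(k-1)$. So the Helly property yields a vertex $v'$ with $d(u,v')\le k-1$, hence $v'\neq v$, and with $N[v]\subseteq N[v']$. This is what the paper extracts from Theorem~\ref{t:helly_distance}, with $v'=u$ when $k=1$.

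Lemma~\ref{l:dominated_neighborhood} then puts $v$ in $\partial X$. That lemma is elementary by flagness: every maximal simplex $\sigma\ni v$ must contain $v'$, and $\sigma\setminus\{v'\}\ni v$ is a facet of $\sigma$ and of no other simplex. With this in hand, your Steps 1 and 2 coincide with the paper's.

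\textbf{Step 3 is correct and takes a different route.} The paper gets surjectivity from the extension property of Theorem~\ref{t:hellyfication}, an isometric embedding $\Helly(\partial X)\hookrightarrow G(X)$, plus a cardinality count. Your pairwise-intersecting-balls argument uses only the Helly property of $X$, and it exhibits the preimage of each extremal form directly.
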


An immediate consequence of this theorem is that any vertex of $X$ can be uniquely distinguished by its distances to the boundary vertices.
The key technical ingredient is the following boundary control property (see Proposition~\ref{p:boundary_control}):
given vertices $z,x\in X$, there is a boundary vertex $w$ such that $x$ lies on a shortest $(z,w)$-path.
Once this is established, we show that the boundary-distance maps of vertices satisfy the extremality condition defining the Hellyfication.

There is an important distinction between the present result and the boundary-rigidity theorem of \cite{BCC2025boundary}.
Boundary rigidity asserts that the boundary metric determines a Helly complex.
Theorem~\ref{t:main} identifies the reconstruction object itself: the reconstruction is the Hellyfication of the boundary metric.
Thus the result gives a universal-property interpretation of boundary rigidity, rather than a reconstruction algorithm.
The proof uses a different strategy: instead of reconstructing $X$ inductively, we construct an explicit isometric embedding of $X$ into the injective hull of its boundary.

The rest of the paper is organized as follows.
In Section~\ref{s:preliminaries} we fix graph and simplicial complex conventions, define the combinatorial boundary, and present Helly graphs together with some well-known properties. 
In Section~\ref{s:hellyfication} we review the Hellyfication of a finite integer-valued metric space in terms of extremal integer metric forms.
In Section~\ref{s:proofs} we establish the main technical result and use it to prove Theorem~\ref{t:main}.

\section{Preliminaries}\label{s:preliminaries}

\subsection{Graphs and simplicial complexes}
We begin with some standard definitions of graphs and simplicial complexes, and fix some notation that will be used across the article.
All simplicial complexes and graphs considered in this article are finite.

A \emph{simplicial complex} $X$ on a finite set $V$ is a subset of the power set of $V$ (without the empty subset), whose elements are called \emph{simplices}, such that any nonempty subset of a simplex is also a simplex.
The dimension of a simplex $\sigma$ is $|\sigma|-1$ and the dimension of $X$ is the largest dimension of a simplex of $X$.
If $\sigma$ is a simplex of dimension $k$, then all simplices of $X$ contained in $\sigma$ and having dimension $k-1$ are called the \emph{facets} of $\sigma$.
Simplices of $X$ are called \emph{maximal} if they are maximal by inclusion, and \emph{non-maximal} otherwise.
The \emph{dimension} of a simplicial complex is the maximal dimension of its simplices.
A \emph{graph} is a simplicial complex whose dimension is at most $1$.

For a simplicial complex $X$, the \emph{$k$-skeleton}  $X^{(k)}$  of $X$ consists of all simplices of dimension at most $k$.
We use the notations $V(X) = X^{(0)}$ for the set of vertices
(0-simplices) of $X$ and $G(X):=X^{(1)}$ for the 1-skeleton of $X$.
The \emph{clique complex} of a graph $G$, denoted $K(G)$, is the simplicial complex whose simplices are the vertex sets of the cliques of $G$.
A simplicial complex $X$ is a \emph{flag simplicial complex} if $X=K(G(X))$.
Note that flag complexes are uniquely determined by their 1-skeleton.

When two vertices $u,v$ form a simplex in a simplicial complex $X$, we write $u \sim v$.
All maps between simplicial complexes $X$ and $Y$ (or between graphs) in this article are \emph{simplicial maps}.
That is, maps $f:V(X) \to V(Y)$ such that if $v \sim w$ in $X$ then $f(v) = f(w)$ or $f(v) \sim f(w)$ in $Y$.

Every connected simplicial complex $X$ (or graph $G$) defines a metric space, which we indistinguishably call $X$ (or $G$) as well.
It is a finite metric space whose underlying set is $V(X)$, and the distance $d_X(u,v)$ between two vertices $u$ and $v$ of $X$ is the least amount of edges in a $(u,v)$-path in $G(X)$.

Given two vertices $u,v \in V(X)$, the \emph{interval} $I_X(u,v)$ between $u$ and $v$ consists of all vertices on shortest $(u,v)$-paths, that is, of all vertices (metrically) between $u$ and $v$: 
\[
I_X(u,v)=\{ x\in V(X): d_X(u,x)+d_X(x,v)=d_X(u,v)\}.
\]
For a vertex $v$ of $X$, the \emph{neighbourhood} of $v$ is
\[
N[v]=\{ x\in V(X): d_X(v,x)\leq 1\}.
\]

For a  simplicial complex $X$, the \emph{combinatorial boundary}
$\partial X$ is the downward closure of all non-maximal simplices of $X$ such that each of them is a facet of a unique simplex of $X$.
In this article we consider $\partial X$ as a metric space whose underlying set is $V(\partial X)$, and with the induced distance from $X$.

The one-vertex complex is a harmless degenerate case.
We adopt the convention that if $X$ consists of a single vertex, then $\partial X=X$.
For every other finite Helly complex, the boundary is nonempty, as noted in Remark~\ref{r:single_vertex} below.

A vertex $u$ of a simplicial complex $X$ is \emph{dominated} by another vertex $v$ if $N[v]$ contains $N[u]$.

\begin{lemma}[{\cite[Lemma 2.5]{BCC2025boundary}}]\label{l:dominated_neighborhood}
    Let $X$ be a simplicial complex, and $u$ and $v$ vertices of $X$ such that $v$ dominates $u$.
    Then $N[u]\setminus \{ v\}\subseteq \partial X$. 
\end{lemma}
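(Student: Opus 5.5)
The plan is to exhibit, for each $x\in N[u]\setminus\{v\}$, an explicit non-maximal simplex $\tau$ with $x\in\tau$ that is a facet of exactly one simplex of $X$; then $x\in\partial X$ by definition. The argument uses that $X$ is flag, i.e.\ $X=K(G(X))$. This is the setting of interest, since Helly complexes are clique complexes of Helly graphs, and I will read the lemma under this hypothesis. For arbitrary simplicial complexes it fails: for the hollow triangle, $v$ dominates $u$, but every simplex is a vertex or a maximal edge and each vertex lies in two edges, so $\partial X=\emptyset$. We also take $u\neq v$, as implicit in the notion of domination.

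Fix $x\in N[u]\setminus\{v\}$; possibly $x=u$. First choose a maximal simplex $\sigma$ of $X$ containing $\{u,x\}$. Such a $\sigma$ exists because $u\sim x$ or $u=x$. Every vertex of $\sigma$ lies in $N[u]\subseteq N[v]$, so $\sigma\cup\{v\}$ is a clique of $G(X)$. By flagness it is a simplex, so maximality forces $v\in\sigma$. Put $\tau=\sigma\setminus\{v\}$. This is a facet of $\sigma$, it is non-maximal, and it contains both $u$ and $x$ because neither equals $v$.

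Next I show that $\sigma$ is the unique simplex having $\tau$ as a facet. Let $\rho=\tau\cup\{w\}$ be any simplex with $\tau$ as a facet. Since $u\in\tau$, we have $w\sim u$, hence $w\in N[v]$. Then $\tau\cup\{v,w\}$ is a clique: it consists of $\sigma$, the vertex $w$ (adjacent to all of $\tau$ via $\rho$), and $w\sim v$. By flagness it is a simplex containing $\sigma$. Maximality of $\sigma$ gives $w\in\sigma$, so $w=v$ and $\rho=\sigma$. Therefore $\tau$ belongs to the family whose downward closure defines $\partial X$, and $x\in\tau\subseteq\partial X$.

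There is no serious obstacle here; the only delicate point is the flagness hypothesis. It is used twice: to force $v$ into the maximal simplex $\sigma$, and to rule out a second simplex $\tau\cup\{w\}$ with $w\neq v$. I would make this hypothesis explicit in the statement or a remark, citing that Helly complexes are flag.
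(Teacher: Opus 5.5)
The paper gives no proof of this lemma; it is quoted from \cite[Lemma 2.5]{BCC2025boundary}, so there is no in-paper argument to compare your route against.

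Your proof is correct and complete. Flagness together with $N[u]\subseteq N[v]$ forces the maximal simplex $\sigma\supseteq\{u,x\}$ to contain $v$. The facet $\tau=\sigma\setminus\{v\}$ is nonempty because $u\neq v$, and it contains $x$ because $x\neq v$. For any coface $\tau\cup\{w\}$ you get $w\sim u$, hence $w\in N[v]$. If $w\neq v$, then $\sigma\cup\{w\}$ is a clique, and maximality of $\sigma$ gives $w=v$, a contradiction. If $w=v$, the coface is $\sigma$ itself; your write-up glosses this trivial case.

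Your caveat about flagness is right and worth keeping. In this paper's conventions, flag complexes are a distinguished subclass. So the statement as printed, for an arbitrary simplicial complex, is false, and your hollow triangle is a valid counterexample: $b$ dominates $a$, but every vertex is a facet of two maximal edges, so $\partial X=\emptyset$.

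This does not damage the paper. The lemma is only invoked, in Remark~\ref{r:single_vertex} and Proposition~\ref{p:boundary_control}, for Helly complexes, which are clique complexes and hence flag. The hypothesis should simply read ``flag simplicial complex''.
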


\subsection{Helly graphs} 

\emph{Helly graphs} are the connected graphs in which the balls satisfy the Helly property, i.e., any collection of pairwise intersecting balls has a nonempty intersection.
We call the clique complexes of Helly graphs \emph{Helly complexes}.
They have recently attracted a lot of attention in Geometric Group Theory due to the fact that groups acting ``nicely'' on them satisfy strong geometric, algebraic and algorithmic properties (see \cite{ChChHiOs,ChalopinChepoiGenevoisHiraiOsajda2025,huang2021helly}).
Helly complexes have been characterized in \cite{ChChHiOs} as the flag, simply connected clique-Helly simplicial complexes (clique-Hellyness means that the set of maximal simplices satisfies the Helly property).

We continue with a property of Helly graphs that we require in this article.

\begin{theorem}[{\cite[Theorem 1(iv)]{BaPe}}]\label{t:helly_distance} Let $G$ be a Helly graph (or complex), and let $z,u\in V(G)$ with $k=d_G(z,u)\geq 2$. 
Then there exists a vertex $v$ with $d_G(z,v)=k-1$ such that $v$ is adjacent to $u$ and to every neighbour $w$ of $u$ satisfying $d_G(z,w)\le k$. 
\end{theorem}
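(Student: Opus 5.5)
The plan is to apply the Helly property directly to a well-chosen family of balls, using that in a graph two balls $B(a,r)$ and $B(b,s)$ intersect exactly when $d_G(a,b)\le r+s$: take an intermediate vertex on a shortest $(a,b)$-path. Write $B(x,r)=\{y: d_G(x,y)\le r\}$ and $W=\{w\in N[u]\setminus\{u\} : d_G(z,w)\le k\}$. Consider the family
\[
\mathcal{F}=\{B(z,k-1),\ B(u,1)\}\cup\{B(w,1): w\in W\}.
\]

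First, I check that the balls in $\mathcal{F}$ pairwise intersect.
\begin{itemize}
\item $B(z,k-1)\cap B(u,1)\neq\emptyset$, since $d_G(z,u)=k=(k-1)+1$.
\item For $w\in W$, $B(z,k-1)\cap B(w,1)\neq\emptyset$, since $d_G(z,w)\le k=(k-1)+1$.
\item $B(u,1)\cap B(w,1)$ and $B(w,1)\cap B(w',1)$ contain $u$, because each $w\in W$ is a neighbour of $u$.
\end{itemize}
Since $G$ is Helly and $\mathcal{F}$ is finite, there is a vertex $v$ lying in every ball of $\mathcal{F}$.

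Next, I read off the properties of $v$.
\begin{itemize}
\item From $v\in B(z,k-1)$ we get $d_G(z,v)\le k-1$.
\item From $v\in B(u,1)$ and the triangle inequality, $d_G(z,v)\ge d_G(z,u)-1=k-1$. Hence $d_G(z,v)=k-1$.
\item In particular $v\neq u$, so $d_G(u,v)=1$ and $v\sim u$.
\item For each $w\in W$, $v\in B(w,1)$ gives $v=w$ or $v\sim w$.
\end{itemize}

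The only delicate point is the possibility $v=w$. This can only happen for some $w\in W$ with $d_G(z,w)=k-1$. In that case the conclusion should be read as $w\in N[v]$. I expect this is the intended meaning, namely $N[u]\cap B(z,k)\subseteq N[v]$, matching the closed-neighbourhood domination language used for Lemma~\ref{l:dominated_neighborhood}. I will phrase the final statement that way rather than try to force strict adjacency, which the ball argument cannot guarantee.

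The hypothesis $k\ge 2$ is not really needed for the argument beyond $k\ge 1$. It only ensures that $v$ is a genuine intermediate vertex distinct from $z$. No step here seems to be a real obstacle; the whole content is the choice of the family $\mathcal{F}$ and the radius $k-1$ for the ball around $z$.
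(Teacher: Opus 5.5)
Your proof is correct. The paper gives no proof of this statement: it imports it from Bandelt--Pesch and uses it only in the proof of Proposition~\ref{p:boundary_control}. Your argument is the standard direct derivation of this BFS-domination property from the Helly property. You apply Helly to $B(z,k-1)$ together with the balls $B(w,1)$ for $w\in N[u]$ with $d_G(z,w)\le k$, and then the triangle inequality forces $d_G(z,v)=k-1$ and $v\sim u$.

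You can drop the hedging about $v=w$. The closed-neighbourhood reading is forced, not merely the likely intent. The vertex $v$ you produce is itself a neighbour of $u$ with $d_G(z,v)=k-1\le k$, so no vertex could satisfy the literal strict-adjacency version. The inclusion $N[u]\cap B(z,k)\subseteq N[v]$ is exactly the form the paper uses when it concludes $N[x_i]\subseteq N[v]$.

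Your remark that $k\ge 1$ suffices is also consistent with the paper. Its separately handled case $k=1$ is just the instance $v=z$.
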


\begin{remark}\label{r:single_vertex}
    Combining Lemma~\ref{l:dominated_neighborhood} and Theorem~\ref{t:helly_distance} by taking vertices at maximum distance, one can see that any Helly complex with at least two vertices has a nonempty boundary, moreover its boundary has at least two vertices.
    This is why, for Theorem~\ref{t:main} to hold, we require that the boundary of a simplicial complex with a single vertex is itself.
\end{remark}

\section{Hellyfication}\label{s:hellyfication}

Let $(B,d)$ be a finite metric space whose distance function is integer-valued.
An \emph{integer metric form} on $B$ is a function $f:B\to \ZZ$
satisfying
\[
f(a)+f(b)\geq d(a,b) \qquad (\forall \, a,b\in B).
\]

Let $\Delta^0(B)$ denote the set of such functions.
Given $f,g \in \Delta^0(B)$, we say $f \leq g$ if $f(x) \leq g(x)$ for each $x \in B$.
An element $f\in\Delta^0(B)$ is called \emph{extremal} if it is minimal with respect to this order: whenever $g\in\Delta^0(B)$ and $g\leq f$, then $g=f$.
We denote by $E^0(B)$ the set of extremal integer metric forms and give it the supremum metric $d_\infty$.
It can be made into a graph by putting $f \sim g$ if $d_\infty(f,g) = 1$.
The following theorem is a discrete analogue of Isbell's theorem on injective hulls and can be found in different formulations in \cite{ChalopinChepoiGenevoisHiraiOsajda2025, JPM, Pes87, Pes88}.

\begin{theorem}\label{t:hellyfication}
Let $(B,d)$ be a finite integer-valued metric space.
Then $E^0(B)$ is a Helly graph and the canonical map
\[
\iota_B : B \to E^0(B), \qquad b\to d(b,\cdot),
\]
is an isometric embedding.
It is the smallest Helly graph into which $B$ embeds isometrically.
Moreover, if $B$ embeds isometrically into a Helly graph $H$, then the embedding can be extended to an isometric embedding
\[
E^0(B) \hookrightarrow H.
\]
\end{theorem}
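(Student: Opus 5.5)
The plan is to prove the statement in four steps: a pointwise description of $E^0(B)$, a nonexpansive retraction of $\Delta^0(B)$ onto $E^0(B)$, the Helly property together with geodesicity, and finally the embedding and universality claims. The main obstacle is the retraction in the second step, since everything else reduces to it.

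\textbf{Step 1 (description of $E^0(B)$).} For $f\in\Delta^0(B)$ and $x\in B$, the smallest admissible value at $x$ given the other coordinates is
\[
f^\ast(x)=\max\Bigl(0,\max_{b\neq x}\bigl(d(x,b)-f(b)\bigr)\Bigr).
\]
The term $0$ comes from the constraint $2f(x)\ge d(x,x)=0$. I will show that $f$ is extremal if and only if $f=f^\ast$ coordinatewise. If $f(x)>f^\ast(x)$, lowering $f(x)$ by one keeps $f$ in $\Delta^0(B)$, because $f^\ast(x)\le f(x)-1$ means every constraint involving $x$ still holds; so $f$ is not minimal. Conversely, if $g\le f$ with $g\neq f$, pick a coordinate $x$ where $g(x)<f(x)=f^\ast(x)$. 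The maximum defining $f^\ast(x)$ is attained either by some $b$ with $f(x)+f(b)=d(x,b)$, or by the term $0$, in which case $f(x)=0$. In both cases $g$ violates a constraint, using $g(b)\le f(b)$ in the first case.

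A consequence is that extremal forms satisfy $|f(x)-f(y)|\le d(x,y)$. Indeed, if $f(x)=d(x,b)-f(b)$ then
\[
f(x)-f(y)\le d(x,b)-f(b)-\bigl(d(y,b)-f(b)\bigr)\le d(x,y),
\]
and if $f(x)=0$ then $f(x)-f(y)\le 0$.

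\textbf{Step 2 (the retraction, main obstacle).} I need a map $p:\Delta^0(B)\to E^0(B)$ satisfying three properties: $p(g)\le g$, $p$ fixes $E^0(B)$, and $p$ is $1$-Lipschitz for $d_\infty$.

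1. Fix an enumeration $x_1,\dots,x_n$ of $B$.
2. Process the points in order, replacing $g(x_i)$ by $g^\ast(x_i)$ computed from the current values.
3. Each elementary step is $1$-Lipschitz in sup norm, being a maximum of translates of coordinate functions and the constant $0$. Each step stays in $\Delta^0(B)$ and only lowers values, and it fixes extremal forms by Step 1.
4. One pass suffices for extremality. After $x_i$ is made tight, lowering a later coordinate $x_j$ can only increase the quantities $d(x_i,x_j)-g(x_j)$, so $g^\ast(x_i)$ does not decrease. Since the new $g(x_j)$ is still admissible, $g^\ast(x_i)$ cannot exceed $g(x_i)$ either, so $x_i$ stays tight.

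Checking this carefully is the heart of the argument.

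\textbf{Step 3 (Helly property and geodesicity).} Let $B(f_i,r_i)$ be pairwise intersecting balls in $E^0(B)$, so that $d_\infty(f_i,f_j)\le r_i+r_j$. Set $U=\min_i(f_i+r_i)$.

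First, $U\in\Delta^0(B)$. For the indices $i,j$ realizing the minimum at $a$ and at $b$, and using the bound $f_j\ge f_i-d_\infty(f_i,f_j)$,
\[
U(a)+U(b)=f_i(a)+r_i+f_j(b)+r_j\ge f_i(a)+f_i(b)\ge d(a,b).
\]
Second, $U\ge f_i-r_i$ for every $i$, by the pairwise condition, so $d_\infty(U,f_i)\le r_i$. Since $p$ fixes each $f_i$ and is nonexpansive, $p(U)$ lies in every ball. The same argument with $r_1=1$, $r_2=k-1$ produces a neighbour of $f$ one step closer to $g$ when $d_\infty(f,g)=k$. Hence the graph distance of $E^0(B)$ equals $d_\infty$, and $E^0(B)$ is a connected Helly graph.

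\textbf{Step 4 (embedding and universality).} The map $\iota_B(b)=d(b,\cdot)$ lands in $E^0(B)$, since every coordinate is tight via the point $b$. It is isometric because $d_\infty(d(a,\cdot),d(b,\cdot))=d(a,b)$ by the triangle inequality, with equality at the coordinate $a$.

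For the extension, let $B\subseteq H$ isometrically with $H$ Helly, and let $f\in E^0(B)$. The balls $B_H(b,f(b))$ pairwise intersect because $f(a)+f(b)\ge d(a,b)$ and $H$ is geodesic. Choose a point $e(f)$ in their intersection, taking $e(f)=b$ when $f=\iota_B(b)$. Then $d_H(e(f),\cdot)|_B$ lies in $\Delta^0(B)$ and is $\le f$, so it equals $f$ by extremality. Hence the restriction map $r:H\to\Delta^0(B)$, $r(v)=d_H(v,\cdot)|_B$, satisfies $r\circ e=\mathrm{id}$.

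Since $r$ is $1$-Lipschitz, $d_\infty(f,g)\le d_H(e(f),e(g))$. The reverse inequality comes from running the same Helly argument in $H$: intersect $B(e(f),k)$ with the balls $B_H(b,g(b))$ to choose $e(g)$ within distance $k=d_\infty(f,g)$ of $e(f)$, building $e$ consistently along geodesics.

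This yields the isometric extension, and minimality follows because any Helly graph containing $B$ isometrically contains a copy of $E^0(B)$.
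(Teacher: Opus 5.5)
The paper does not prove this theorem; it quotes it from the literature, so your argument has to stand on its own. Steps 1--3 are sound. Step 1 is Lemma~\ref{l:extremal_characterization}, and your retraction is the composite $T_{x_n}\circ\cdots\circ T_{x_1}$ of the operators from that lemma's proof. The one-pass tightness argument is correct, and the $U=\min_i(f_i+r_i)$ argument gives both the Helly property and the fact that the graph metric on $E^0(B)$ is $d_\infty$.

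\textbf{The gap is the isometry claim at the end of Step 4.} You first fix $e(f)$ by independent arbitrary choices in $\bigcap_b B_H(b,f(b))$, and such choices need not be isometric. Take the king's graph on $\{0,\dots,4\}\times\{-2,\dots,2\}$, which is Helly, with $B=\{(0,0),(4,0)\}$. Then $E^0(B)$ is the path of forms $(0,4),(1,3),(2,2),(3,1),(4,0)$, listing $(f(a),f(b))$. Yet sending $(1,3)\mapsto(1,1)$ and $(2,2)\mapsto(2,-2)$ are admissible choices at distance $3$ for forms at distance $1$. Your repair picks $e(g)$ inside a single ball $B_H(e(f),k)$. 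That controls the distance from $e(g)$ to that one $e(f)$ and to nothing else already placed. ``Consistently along geodesics'' does not resolve this: you need $d_H(e(f),e(g))\le d_\infty(f,g)$ for every pair (at least every edge of $E^0(B)$), and $E^0(B)$ is not a tree in general.

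What is missing is the standard one-point-at-a-time extension, with all constraints imposed simultaneously.
\begin{itemize}
\item Set $e(\iota_B(b))=b$ and enumerate the remaining extremal forms. Suppose $e$ is already defined on a set $P\supseteq\iota_B(B)$ with $r\circ e=\mathrm{id}$ and $d_H(e(f),e(f'))=d_\infty(f,f')$ on $P$.
\item To place $g$, apply the Helly property of $H$ to the whole family $\{B_H(e(f),d_\infty(f,g)) : f\in P\}$. It is pairwise intersecting, since $d_H(e(f),e(f'))=d_\infty(f,f')\le d_\infty(f,g)+d_\infty(g,f')$.
\item Because $d_\infty(\iota_B(b),g)=g(b)$ (this uses your Lipschitz bound from Step 1), the family contains every $B_H(b,g(b))$. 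So any point $v$ of the intersection satisfies $r(v)\le g$, hence $r(v)=g$ by extremality.
\item Then $d_\infty(f,g)=d_\infty(r(e(f)),r(v))\le d_H(e(f),v)\le d_\infty(f,g)$ for all $f\in P$.
\end{itemize}
Setting $e(g)=v$ preserves both inductive hypotheses. Once all forms are placed, $e$ is an isometric embedding extending $B\hookrightarrow H$, and minimality follows as you say.
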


The graph $E^0(B)$ is usually called the \emph{Hellyfication} of $B$ and denoted $\Helly(B)$.
We record the following standard characterization of extremal integer metric forms (see for example \cite[Section 3.2]{ChalopinChepoiGenevoisHiraiOsajda2025}), and include a proof for completeness.

\begin{lemma}\label{l:extremal_characterization}
Let $f\in\Delta^0(B)$. Then $f$ is extremal if and only if, for every $a\in B$ there exists $b\in B$ such that $f(a)+f(b)=d(a,b)$.
\end{lemma}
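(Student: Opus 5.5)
We prove the two implications separately; both rest only on the integrality of $f$ and $d$ and on the finiteness of $B$.

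For the \emph{``if'' direction}, assume that for every $a\in B$ there is $b\in B$ with $f(a)+f(b)=d(a,b)$. Let $g\in\Delta^0(B)$ satisfy $g\le f$, and suppose $g(a)<f(a)$ for some $a$. Pick $b$ with $f(a)+f(b)=d(a,b)$. Then
\[
g(a)+g(b)\le g(a)+f(b)<f(a)+f(b)=d(a,b),
\]
which contradicts $g\in\Delta^0(B)$. (This covers $b=a$ as well, since then the condition reads $2f(a)=0$ and $2g(a)<0=d(a,a)$.) Hence $g=f$, and $f$ is extremal.

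For the \emph{``only if'' direction}, we argue by contraposition. Suppose there is $a\in B$ with $f(a)+f(b)\neq d(a,b)$ for all $b\in B$. Since $f\in\Delta^0(B)$, this means $f(a)+f(b)>d(a,b)$ for all $b$, and by integrality $f(a)+f(b)\ge d(a,b)+1$. Define $g$ to agree with $f$ except that $g(a)=f(a)-1$. We check that $g\in\Delta^0(B)$:
\begin{itemize}
\item for $b\neq a$, we have $g(a)+g(b)=f(a)+f(b)-1\ge d(a,b)$;
\item for $b=a$, we use the strict inequality $2f(a)>0$, which by integrality gives $f(a)\ge 1$, so $2g(a)=2f(a)-2\ge 0=d(a,a)$;
\item pairs not involving $a$ are unchanged.
\end{itemize}
Thus $g\in\Delta^0(B)$, $g\le f$ and $g\ne f$, so $f$ is not extremal.

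The only point needing care is the diagonal pair $b=a$. A decrease by $1$ lowers $2f(a)$ by $2$, so we need $2f(a)\ge 2$, not merely $2f(a)\ge 1$. Integrality of $f(a)$ gives this, because $2f(a)>0$ forces $f(a)\ge 1$.
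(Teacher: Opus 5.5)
Your proof is correct and follows the paper's argument: the ``if'' direction is the same chain of inequalities, and for ``only if'' you lower $f$ at the bad point $a$, as the paper does. The only difference is the size of the decrease. You lower $f(a)$ by exactly $1$, using integrality for the slack and $f(a)\ge 1$ for the diagonal pair, whereas the paper drops it to $T_af(a)=\max\{\{0\}\cup\{d(a,c)-f(c)\mid c\in B\}\}$; incidentally, your version never actually uses the finiteness of $B$, although you say it does.
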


\begin{proof}
Suppose first that $f$ is extremal, and assume there exists $a \in B$ such that for all $b\in B$ we have that $f(a)+f(b) > d(a,b)$.
We define $T_af:B\to \ZZ$ as
\[
T_af(b) = \begin{cases} 
 f(b) & \text{if } a \neq b \\ 
 \max\{\{0\}\cup\{d(a,c)-f(c) \mid c\in B\}\} & \text{if } a = b 
\end{cases}
\]
It is clear that $T_af$ is an integer metric form.
Since $f$ is an integer metric form, then $T_af\leq f$.
Also, since $f(a) > d(a,b) - f(b)$ for every $b\in B$ and $B$ is finite we have that $f(a) > T_af$, a contradiction with $f$ being extremal.

Conversely, let $g\in\Delta^0(B)$ such that $g\leq f$.
Fix $a\in B$ and choose $b\in B$ with
\[
f(a)+f(b)=d(a,b).
\]
Therefore, since $g$ is a metric form
\[
f(a)+f(b) = d(a,b) \leq g(a)+g(b) \leq g(a)+f(b),
\]
and hence $f(a)\leq g(a)$.
So, $g(a) = f(a)$.
Since $a$ was arbitrary, $g=f$.
\end{proof}

\section{Boundary control and proof of the main theorem}\label{s:proofs}

We start with the key technical result needed for the proof of Theorem~\ref{t:main}.
The strength of this proposition is that it allows to distinguish any two distinct vertices by their distance to a single boundary vertex.

\begin{proposition}\label{p:boundary_control}
Let $X$ be a finite Helly complex, and let $z\in V(X)$.
For every vertex $x\in V(X)$ there exists a vertex $w \in V(\partial X)$ such that $x\in I_X(z,w)$.
\end{proposition}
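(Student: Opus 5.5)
The plan is to walk outward from $x$ away from $z$ until we reach a vertex that is dominated, and then to use Lemma~\ref{l:dominated_neighborhood}. Fix $z$ and $x$. Among all vertices $y$ with $x\in I_X(z,y)$, which is a nonempty set since it contains $x$ itself, choose $w$ with $d_X(z,w)$ maximal; this is possible because $X$ is finite. Since $x\in I_X(z,w)$, it suffices to prove that $w\in V(\partial X)$.

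\textbf{Step 1: $w$ is a local maximum of $d_X(z,\cdot)$.} Suppose some neighbour $w'$ of $w$ satisfies $d_X(z,w')=d_X(z,w)+1$. Then $w\in I_X(z,w')$. Intervals compose: from $x\in I(z,w)$ and $w\in I(z,w')$ we get
\[
d(z,w')=d(z,x)+d(x,w)+d(w,w')\ge d(z,x)+d(x,w'),
\]
and the triangle inequality gives equality. Hence $x\in I(z,w')$, which contradicts the maximality of $w$. So every neighbour $u$ of $w$ satisfies $d(z,u)\le k:=d(z,w)$.

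\textbf{Step 2: $w$ is dominated, and so lies on the boundary.} There are three cases.
\begin{itemize}
\item If $k\ge 2$, apply Theorem~\ref{t:helly_distance} to $z$ and $w$. This gives a vertex $v\sim w$ with $d(z,v)=k-1$ that is adjacent to every neighbour $u$ of $w$ with $d(z,u)\le k$. By Step~1 this covers all neighbours of $w$, so $N[w]\subseteq N[v]$ and $v$ dominates $w$. Lemma~\ref{l:dominated_neighborhood} then gives $N[w]\setminus\{v\}\subseteq\partial X$, and since $w\ne v$ we get $w\in V(\partial X)$.
\item If $k=1$, Step~1 says every neighbour of $w$ has distance at most $1$ from $z$, so $z$ dominates $w$. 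The same lemma gives $w\in\partial X$.
\item If $k=0$, then $w=z=x$ and $z$ has no neighbour at distance $1$. This forces $X$ to be a single vertex, so $\partial X=X$ by convention and $w=z\in\partial X$.
\end{itemize}

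I expect the main subtlety to lie in Step~2. We must check that Theorem~\ref{t:helly_distance} yields domination of \emph{all} of $N[w]$: the vertex $v$ must be adjacent to $w$ itself, and every neighbour must have distance at most $k$, which is exactly what Step~1 guarantees. We must also handle the small-distance cases, where the theorem does not apply, directly as above.
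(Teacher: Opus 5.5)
Your proof is correct and is essentially the paper's argument. The paper builds a path from $x$ along which $d_X(z,\cdot)$ strictly increases until it reaches $V(\partial X)$, and rules out getting stuck at a non-boundary vertex by exactly your domination argument (by $z$ when $k=1$, by the vertex of Theorem~\ref{t:helly_distance} when $k\ge 2$, then Lemma~\ref{l:dominated_neighborhood}); your extremal choice of $w$ takes the endpoint of a maximal ascent directly, and it also covers $x=z$ uniformly, where the paper instead relies on the nonemptiness of $\partial X$ from Remark~\ref{r:single_vertex}.
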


\begin{proof}
If $x=z$, then any $w \in V(\partial X)$ works, so assume that $x\neq z$.

We construct a path starting at $x$ along which the distance from $z$
strictly increases until we reach the boundary of $X$.
Suppose that $x_i$ has already been chosen and that
$x_i \notin V(\partial X)$. Put $k=d_X(z,x_i)$.
We claim that $x_i$ has a neighbour $x_{i+1}$ satisfying
\[
d_X(z,x_{i+1})=k+1.
\]

Suppose not. Then every neighbour $y$ of $x_i$ satisfies
\[
d_X(z,y) \leq k=d_X(z,x_i).
\]
If $k=1$, then $z$ is adjacent to every neighbour of $x_i$, so it dominates $x_i$.
Then by Lemma~\ref{l:dominated_neighborhood} we have that $x_i\in V(\partial X)$, a
contradiction.

Assume now that $k\geq 2$, and apply Theorem~\ref{t:helly_distance} to $z$ and
$x_i$. We obtain a vertex $v\sim x_i$ with
\[
d_X(z,v)=k-1,
\]
and such that $v$ is adjacent to every neighbour $y$ of $x_i$ with
$d_X(z,y)\le k$.
By our assumption, every neighbour of $x_i$ satisfies this
inequality.
Thus $N[x_i]\subseteq N[v]$,
so $v$ dominates $x_i$.
Again Lemma~\ref{l:dominated_neighborhood}
contradicts $x_i \notin V(\partial X)$.

Starting with $x_0=x$, we can therefore construct a sequence
\[
 x_0,x_1,x_2,\ldots
\]
such that consecutive vertices are adjacent and
\[
 d_X(z,x_{i+1})=d_X(z,x_i)+1,
\]
or $x_i \in V(\partial X)$, at which point we stop.
Since $X$ is finite, this process must terminate.
Let $w=x_m \in V(\partial X)$ be the last vertex of this sequence.
The equalities above imply
\[
 d_X(z,w)=d_X(z,x)+d_X(x,w),
\]
so $x\in I_X(z,w)$ as required.
\end{proof}

Let $X$ be a Helly complex.
Recall that we consider $\partial X$ as its vertex set with the induced metric from $X$.
For every vertex $x\in V(X)$ define
\[
f_x : \partial X \to \ZZ_{\ge0} , \qquad f_x(b)=d_X(x,b).
\]

Just as with Lemma~\ref{l:extremal_characterization}, the following fact is well-known to experts (see for example \cite{dress1984trees,isbell1964six,lang2013injective}).

\begin{lemma}\label{l:fx_extremal}
For every $x\in V(X)$, the function $f_x$ is an extremal integer metric form on $\partial X$.
\end{lemma}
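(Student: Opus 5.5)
We verify the two conditions directly. First, $f_x$ is an integer metric form: it takes values in $\ZZ_{\ge 0}$, and for $a,b\in V(\partial X)$ the triangle inequality in $X$ gives
\[
f_x(a)+f_x(b)=d_X(a,x)+d_X(x,b)\ge d_X(a,b),
\]
which is the distance on $\partial X$, since $\partial X$ carries the metric induced from $X$. So $f_x\in\Delta^0(\partial X)$.

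For extremality we use the criterion of Lemma~\ref{l:extremal_characterization}. Given $a\in V(\partial X)$, we need $b\in V(\partial X)$ with $f_x(a)+f_x(b)=d_X(a,b)$, that is, $x\in I_X(a,b)$. We apply Proposition~\ref{p:boundary_control} with $z=a$ and the vertex $x$. It produces $w\in V(\partial X)$ with $x\in I_X(a,w)$, so
\[
d_X(a,x)+d_X(x,w)=d_X(a,w),
\]
and $b=w$ works. Lemma~\ref{l:extremal_characterization} then shows $f_x$ is extremal.

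The degenerate case needs a separate check. If $X$ is a single vertex, then by convention $\partial X=X$ and $f_x$ is the zero function on one point. Extremality holds by taking $b=a$, since $0+0=d(a,a)$. If $X$ has at least two vertices, Remark~\ref{r:single_vertex} gives that $\partial X$ is nonempty, so the domain is nonempty and the criterion is meaningful.

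All the content is in Proposition~\ref{p:boundary_control}, which we may assume. The only point to get right is applying it with the boundary vertex $a$ as the base point $z$, not $x$; then the rest is bookkeeping.
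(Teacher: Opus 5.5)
Your proof is correct and is essentially the paper's argument. The triangle inequality gives $f_x\in\Delta^0(\partial X)$, and Proposition~\ref{p:boundary_control} applied with base point $z=a$ supplies the boundary vertex $b$ required by Lemma~\ref{l:extremal_characterization}; your separate check of the one-vertex case is harmless but redundant, since the case $x=z$ of the proposition already covers it once $V(\partial X)\neq\emptyset$.
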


\begin{proof}
The triangle inequality gives that for all $a,b\in V(\partial X)$,
\[
 f_x(a)+f_x(b)
 =d_X(x,a)+d_X(x,b)
 \geq d_X(a,b).
\]
Thus $f_x\in\Delta_0(\partial X)$.

Fix $a\in V(\partial X)$.
Proposition~\ref{p:boundary_control} gives a vertex
$b\in V(\partial X)$ such that $x\in I_X(a,b)$. Hence
\[
 d_X(a,x)+d_X(x,b)=d_X(a,b),
\]
or equivalently
\[
 f_x(a)+f_x(b)=d_X(a,b).
\]
By Lemma~\ref{l:extremal_characterization}, $f_x$ is extremal.
\end{proof}

Consequently there is a well-defined map
\[
 \Phi : V(X) \to \Helly(\partial X), \qquad \Phi(x)=f_x.
\]

\begin{proposition}\label{p:isometric_embedding}
The map $\Phi$ is an isometric embedding.
\end{proposition}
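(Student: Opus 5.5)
We must show that $d_\infty(f_x,f_y)=d_X(x,y)$ for all $x,y\in V(X)$. Since $\Helly(\partial X)$ is the graph $E^0(\partial X)$ equipped with $d_\infty$, this identity gives that $\Phi$ is an isometric embedding, with injectivity included. The plan is to prove the two inequalities separately. The upper bound is routine, and the lower bound will come from Proposition~\ref{p:boundary_control}.

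\emph{Upper bound.} For every $b\in V(\partial X)$ the triangle inequality gives
\[
|f_x(b)-f_y(b)|=|d_X(x,b)-d_X(y,b)|\le d_X(x,y),
\]
so $d_\infty(f_x,f_y)\le d_X(x,y)$.

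\emph{Lower bound.} This is the step that uses the geometry of $X$. Apply Proposition~\ref{p:boundary_control} with $z=x$ and the vertex $y$. It gives a boundary vertex $w\in V(\partial X)$ with $y\in I_X(x,w)$, that is,
\[
d_X(x,w)=d_X(x,y)+d_X(y,w).
\]
Then $f_x(w)-f_y(w)=d_X(x,y)$, so $d_\infty(f_x,f_y)\ge d_X(x,y)$. This is exactly the remark made before Proposition~\ref{p:boundary_control}: a single boundary vertex realizes the distance between two given vertices. The degenerate case of a one-vertex complex is trivial under the convention $\partial X=X$.

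\emph{Compatibility with the graph metric.} One further point needs checking: the metric on $\Helly(\partial X)$ used in Theorem~\ref{t:main} is the graph metric, where adjacency means $d_\infty=1$. I would invoke the standard fact that in $E^0(B)$ the graph distance coincides with $d_\infty$. This holds because $E^0(B)$ is a Helly graph whose path metric is $d_\infty$, as implicit in Theorem~\ref{t:hellyfication}. As a safeguard, the same fact also follows directly for our map: adjacent vertices of $X$ map to functions at $d_\infty$-distance exactly $1$, so a geodesic in $X$ maps to a path in $\Helly(\partial X)$ of the same length. Hence the graph distance between $\Phi(x)$ and $\Phi(y)$ is at most $d_X(x,y)$. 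It is also at least $d_\infty(f_x,f_y)=d_X(x,y)$, since each edge changes $d_\infty$ by at most $1$. So graph distance and $d_\infty$ agree on the image of $\Phi$, and $\Phi$ is an isometric embedding in either sense.

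I do not expect any real obstacle, because the substantive work is already contained in Proposition~\ref{p:boundary_control}.
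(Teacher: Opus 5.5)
Your proof is correct and follows the paper's argument. The upper bound comes from the triangle inequality and the lower bound from Proposition~\ref{p:boundary_control}; the paper applies it with $z=y$ to get $x\in I_X(y,b)$, while you swap the roles of $x$ and $y$, which makes no difference. Your extra paragraph checking that the graph metric of $\Helly(\partial X)$ agrees with $d_\infty$ on the image of $\Phi$ is a sound addition that the paper leaves implicit.
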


\begin{proof}
For arbitrary $x,y\in V(X)$ and $b\in V(\partial X)$, the triangle inequality gives
\[
 |f_x(b)-f_y(b)|\le d_X(x,y).
\]
Thus
\[
d_\infty(f_x,f_y) \le d_X(x,y).
\]

For the reverse inequality, take $x,y\in V(X)$.
By Proposition~\ref{p:boundary_control} there exists $b\in V(\partial X)$ such that $x\in I_X(y,b)$. Now
\[
 d_X(y,b)=d_X(y,x)+d_X(x,b),
\]
and therefore
\[
 |f_x(b)-f_y(b)| = d_X(y,b)-d(x,b) = d_X(x,y).
\]
Hence the maximum is attained and  $d_\infty(f_x,f_y) = d_X(x,y)$, so $\Phi$ is an isometric embedding.
\end{proof}

The preceding results can be summarized in a commutative diagram of isometric embeddings:
\[
\begin{tikzcd}[column sep=large,row sep=large]
\partial X \arrow[r,hook,"\iota_{\partial X}"] \arrow[d,hook] & \Helly(\partial X) \\
X \arrow[ru,hook,"\Phi"']
\end{tikzcd}
\]
where the left-hand vertical map is the inclusion of the boundary vertices into $X$.

\begin{proof}[Proof of Theorem~\ref{t:main}]
Since $X$ is a Helly graph and $\partial X$ embeds isometrically into $X$, Theorem~\ref{t:hellyfication} gives an isometric embedding
\[
\Psi : \Helly(\partial X) \hookrightarrow G(X)
\]
extending the identity on $\partial X$.

On the other hand, Proposition~\ref{p:isometric_embedding} gives an isometric embedding
\[
\Phi : G(X) \hookrightarrow \Helly(\partial X)
\]
whose restriction to $\partial X$ is the canonical embedding 
\[
\iota_{\partial X} : \partial X \to \Helly(\partial X),
\]
from Theorem~\ref{t:hellyfication}.

Since both spaces are finite, they must have the same cardinality, so the isometric embeddings are in fact isometries.
At the simplicial complex level we get
\[
X\cong K(\Helly(\partial X)),
\]
and at the graph level we get the desired isomorphism
\[
G(X)\cong \Helly(\partial X).
\]
\end{proof}

\section*{Declarations}

\subsection*{Funding} The researcher was funded by a CONICET postdoctoral grant.

\subsection*{Acknowledgements} I am thankful to Jérémie Chalopin and Victor Chepoi for introducing me to the topic and for fruitful conversations.

\bibliographystyle{amsalpha}
\bibliography{mybib}

\end{document}